\documentclass[11pt]{amsart}
\usepackage[margin=1.35in]{geometry}
\usepackage{amscd,amsmath,amsxtra,amsthm,amssymb,stmaryrd,xr,mathrsfs,mathtools,enumerate,commath, comment}
\usepackage{stmaryrd}
\usepackage{xcolor}
\usepackage{commath}
\usepackage{comment}
\usepackage{tikz-cd}
\usepackage{longtable} 
\usepackage{pdflscape} 
\usepackage{booktabs}
\usepackage{hyperref}
\definecolor{vegasgold}{rgb}{0.77, 0.7, 0.35}
\definecolor{darkgoldenrod}{rgb}{0.72, 0.53, 0.04}
\definecolor{gold(metallic)}{rgb}{0.83, 0.69, 0.22}
\hypersetup{
 colorlinks=true,
 linkcolor=darkgoldenrod,
 filecolor=brown,      
 urlcolor=gold(metallic),
 citecolor=darkgoldenrod,
 pdftitle={On large Iwasawa $\lambda$-invariants of imaginary quadratic function fields},
 }

\usepackage[all,cmtip]{xy}

\DeclareFontFamily{U}{wncy}{}
\DeclareFontShape{U}{wncy}{m}{n}{<->wncyr10}{}
\DeclareSymbolFont{mcy}{U}{wncy}{m}{n}
\DeclareMathSymbol{\Sh}{\mathord}{mcy}{"58}
\usepackage[T2A,T1]{fontenc}
\usepackage[OT2,T1]{fontenc}

\newtheorem{theorem}{Theorem}[section]
\newtheorem{lemma}[theorem]{Lemma}

\newtheorem{corollary}[theorem]{Corollary}

\newtheorem{proposition}[theorem]{Proposition}

\newcommand{\cF}{\mathcal{F}}

\newcommand{\Z}{\mathbb{Z}}

\newcommand{\F}{\mathbb{F}}

\newcommand{\cO}{\mathcal{O}}

\newcommand{\X}{\mathfrak{X}}

\newcommand{\op}[1]{\operatorname{#1}}

\numberwithin{equation}{section}

\begin{document}

\title[Large Iwasawa $\lambda$-invariants of imaginary quadratic function fields]{On large Iwasawa $\lambda$-invariants of imaginary quadratic function fields}

\author[A.~Ray]{Anwesh Ray}
\address[A.~Ray]{Centre de recherches mathématiques,
Université de Montréal,
Pavillon André-Aisenstadt,
2920 Chemin de la tour,
Montréal (Québec) H3T 1J4, Canada}
\email{anwesh.ray@umontreal.ca}

\maketitle

\begin{abstract}

Let $\ell$ be a prime number and $q$ be a power of $\ell$. Given an odd prime number $p$ and an imaginary quadratic extension $F$ of the rational function field $\mathbb{F}_q(T)$, let $\lambda_p(F)$ denote the Iwasawa $\lambda$-invariant of the constant $\mathbb{Z}_p$-extension of $F$. We show that for any number $r>0$ and all large enough values of $q\not\equiv 1\mod{p}$, there is a positive proportion of imaginary quadratic fields $F/\F_q(T)$ with the property that $\lambda_p(F)\geq r$. The main result is proved as a consequence of recent unconditional theorems of Ellenberg-Venkatesh-Westerland on the distribution of class groups of imaginary quadratic function fields.
\end{abstract}

\section{Introduction}
Let $\ell$ be a prime number and $q$ be a power of $\ell$. Denote by $\F_q(T)$ the field of rational functions over $\F_q$. An \emph{imaginary quadratic field} extension of $\F_q(T)$ is a degree $2$ extension of $\F_q(T)$ in which the prime $\infty$ is ramified. The question studied in this note is motivated by recent results of Ellenberg, Venkatesh and Westerland \cite{ellenberg2016homological} on the distribution of class groups of imaginary quadratic field extensions of $\F_q(T)$. The story begins with predictions made for the distribution of class groups for imaginary quadratic number fields made by Cohen and Lenstra \cite{cohen1984heuristics}, based in random matrix theory. The work of Cohen-Lenstra led to significant developments in the field of \emph{arithmetic statistics}. In the number field context, the predictions are far from proven unconditionally. We consider function field analogues of such heuristics in positive characteristic. The aforementioned results of Ellenberg-Venkatesh-Westerland show that the Cohen-Lenstra heuristics in the function field context are true in the \emph{large $q$-limit}. We refer the reader to \emph{loc. cit.} or Theorem \ref{EVW thm} for a precise statement of the result. 

\par The purpose of this note is to shed light on Iwasawa theoretic invariants introduced and systematically studied by Leitzel \cite{leitzel1970class} and Rosen \cite[Chapter 11]{rosen2002number}. Let $F$ be an imaginary quadratic field extension of $\F_q(T)$ and $p$ be an odd prime. The prime $p$ will be fixed, however, $q$ will be required to be large for our results to hold. For convenience of notation, set $\kappa:=\F_q$ and choose an algebraic closure $\bar{\kappa}$. Denote by $\Z_p$ the ring of $p$-adic integers and $\kappa_\infty^{(p)}$ be the unique $\Z_p$-extension of $\kappa$ which is contained in $\bar{\kappa}$. The constant $\Z_p$-extension $F_\infty$ of $F$ is the composite $F_\infty:=F\cdot \kappa_\infty^{(p)}$. It is constant in the sense that it arises from a subextension of $\bar{\kappa}$. Given $n\in \Z_{\geq 0}$, let $F_n$ be the extension of $F$ which is contained in $F_\infty$ with Galois group $\op{Gal}(F_n/F)$ isomorphic to $\Z/p^n\Z$. The main focus is the study of growth patterns of $p$-class groups in the tower of extensions 
\[F=F_0\subset F_1\subset F_2\subset \dots\subset F_n \subset F_{n+1}\subset \dots .\] Let $e_n$ be the largest power of $p$ that divides the class number of $F_n$. Leitzel and Rosen show that there are well-defined invariants $\lambda_p(F)\in \Z_{\geq 0}$ and $\nu_p(F)\in \Z$ such that for all large enough values of $n$,
\[e_n=\lambda_p(F)n +\nu_p(F).\] In particular, the analogue of the \emph{Iwasawa $\mu$-invariant} vanishes. We fix an odd prime $p$ and study the distribution of $\lambda_p(F)$, where $F$ is allowed to vary over all imaginary quadratic function field extensions of $\F_q(T)$. The main result shows that given any fixed odd prime number $p$ and integer $r>0$, there is a constant $c$ depending only on $p$ and $r$ such that for all $q\geq c$, there is a positive proportion of imaginary quadratic field extensions $F$ of $\F_q(T)$ for which $\lambda_p(F)\geq r$. This result is entirely unconditional, we refer to Theorem \ref{main theorem} for a more detailed statement of the result.
\par We mention here that in the number field context some related questions for $\lambda$-invariants of imaginary quadratic fields were studied by Horie \cite{horie1987note}, Jochnowitz \cite{jochnowitz1994p}, Sands \cite{sands1993non}, Ellenberg-Jain-Venkatesh \cite{ellenberg2011modeling}, Delbourgo-Knospe \cite{delbourgo2022iwasawa}, and the author of this note \cite{ray2022}. We draw our attention in particular to the above mentioned work of Sands, who shows unconditionally that given an odd prime number $p$, there are infinitely many imaginary quadratic number fields $F$ for which the $p$-primary $\lambda$-invariant $\lambda_p(F)$ is $\geq 2$. In the function field context a much stronger result is obtained in this note.

\emph{Organization:} Including the introduction, this note has four sections. In section \ref{s 2}, we introduce preliminary notions from Iwasawa theory and function field arithmetic. In section \ref{s 3}, we prove a number of consequences for the Iwasawa theory of class groups in constant $\Z_p$-towers of function fields. Finally, in section \ref{s 4}, we prove the main result, i.e., Theorem \ref{main theorem}. 

\subsection*{Acknowledgment} The author's research is supported by the CRM Simons postdoctoral fellowship. The author thanks the anonymous referee for a thorough and timely review.

\section{Preliminaries}\label{s 2}

\par In this section, we set up some basic notation and introduce preliminary notions in function field arithmetic and Iwasawa theory of constant $\Z_p$-extensions. For an introduction to function field arithmetic, we refer to \cite[Chapter 5]{rosen2002number}. For an introduction to the cyclotomic Iwasawa theory of number fields, we refer to \cite[Chapter 13]{washington1997introduction}, and for function field analogues in constant $\Z_p$-extensions, we refer to \cite[Chapter 11]{rosen2002number}.

\subsection{Function fields and their class groups} 
\par Throughout, we fix a prime number $p$ and a prime number $\ell$ (that are not necessarily distinct). We assume without further mention that $p$ is odd, however, $\ell=2$ is allowed. Let $\Z_p$ denote the ring of $p$-adic integers, i.e., the inverse limit $\varprojlim_n \Z/p^n\Z$. The Iwasawa theory of function fields is concerned with growth patterns in the $p$-primary parts of class groups in certain \emph{$\Z_p$-towers} over a function field, which we now introduce. 

\par Let $F$ be a global function field of characteristic $\ell$ and field of constants $\kappa=\F_q$. This means that $\kappa$ is algebraically closed in $F$ and the field $F$ is a finite extension of $\kappa(T)$. We note here in passing that the terminology stems from algebraic geometry, since $F$ is the function field of a smooth, geometrically integral, projective algebraic curve defined over $\kappa$. There is a beautiful analogy between number fields and function fields, cf. \cite{rosen2002number} for further details.

\par According to \cite[Chapter 5]{rosen2002number}, a \emph{prime} of $F$ is defined to be the maximal ideal of a discrete valuation ring $R$ which is contained in $F$, and has fraction field $F$. For instance, for $a\in \kappa$, the corresponding prime of $\kappa(T)$ is the maximal ideal of $\kappa[T]_{(T-a)}$. This corresponds to the point $a\in \mathbf{P}^1(\kappa)$. The prime $\infty$ corresponds to the maximal ideal of $\kappa[1/T]_{(1/T)}$. The primes of $\kappa(T)$ which correspond in this way to points of $\mathbf{P}^1(\kappa)$ are called \emph{rational primes}, since they correspond to $\kappa$-valued rational points. Let $P$ be a prime of $F$, and $R$ be the associated discrete valuation ring. The \emph{degree of $P$} is the dimension of $R/P$ over $\kappa$. The prime $P$ is said to be \emph{rational} if has degree $1$. In this case, the prime corresponds to a $\kappa$-valued rational point on the associated projective curve $\mathfrak{X}$. More generally, a prime corresponds to a Galois orbit in $\mathfrak{X}(\bar{\kappa})$, and its degree is the cardinality of its orbit with respect to the action of $\op{Gal}(\bar{\kappa}/\kappa)$.

\par A \emph{divisor} of $F$ is a formal integral linear combination of primes of $F$. Two divisors are equivalent if they differ by a \emph{principal divisor}. The \emph{class group} $\op{Cl}(F)$ is the group of all equivalence classes of divisors of degree $0$. We refer to \emph{loc. cit.} for more precise definitions. The group $\op{Cl}(F)$ is finite (cf. \cite{rosen2002number}) and the class number $h(F)$ is the number of elements in $\op{Cl}(F)$. Let $\op{Cl}_p(F)$ be the $p$-Sylow subgroup of $\op{Cl}(F)$ and set $h_p(F):=\# \op{Cl}_p(F)$. 
\par An imaginary quadratic function field $F$ is a function field with field of constants $\kappa$ such that
\begin{itemize}
    \item there is a prescribed inclusion of $\kappa(T)$ into $F$, 
    \item the degree of $F$ over $\kappa(T)$ is equal to $2$,
    \item $\infty$ is ramified in $F$.
\end{itemize}
Let $F$ be an imaginary quadratic field and denote by $\infty_F$ the prime of $F$ that lies above $\infty$. The ring of integers of $F$ is the integral closure of the polynomial ring $\kappa[T]$ in $F$ and is denoted by $\cO_F$. It is easy to see that $\cO_F$ consists of the functions in $F$ with no poles away from $\{\infty\}$, and thus $\cO_F$ is the ring of $S$-integers for $S=\{\infty\}$. Let $\op{Cl}(\cO_F)$ denote the class group associated to $\cO_F$ (cf. \cite[p.62]{halter1990note} for further details). Note that there is a surjective map $\op{Cl}(F)\rightarrow \op{Cl}(\cO_F)$ with kernel generated by the divisor class of $\infty_F$ (see \emph{loc. cit.}). Thus for any odd prime $p$, denote by $\op{Cl}_p(\cO_F)$ the $p$-Sylow subgroup of $\op{Cl}(\cO_F)$; we find that $\op{Cl}_p(\cO_F)$ is naturally isomorphic to $\op{Cl}_p(F)$. Let $L/F$ be a finite extension in which $\infty_F$ is totally inert. Identify the prime in $L$ above $\infty_F$ with $\infty_F$ itself. We find that for any odd prime $p$, $\op{Cl}_p(\cO_L)$ is naturally isomorphic to $\op{Cl}_p(L)$. The Hilbert class field $H(L)$ is defined to be the maximal abelian unramified extension of $L$ in which $\infty_F$ is totally split. There is a natural isomorphism $\op{Gal}(H(L)/L)\simeq \op{Cl}(\cO_L)$, cf. \cite{rosen1987hilbert, halter1990note}. Let $p$ be an odd prime number and $H_p(L)$ denote the maximal abelian pro-$p$ unramified extension of $L$ in which $\infty_F$ is completely split. Then by the above discussion, there is a natural isomorphism 
\[\op{Gal}(H_p(L)/L)\simeq \op{Cl}_p(L). \]

\subsection{Class group towers over constant $\Z_p$-extensions}
\par Let $F$ be an imaginary quadratic field and $\infty_F$ be the prime above $\infty$. The field of constants is $\kappa$, and $\bar{\kappa}$ is the algebraic closure of $\kappa$ in $\bar{F}$. For $n\in \Z_{\geq 1}$, let $\kappa_n\subset \bar{\kappa}$ be the constant extension of $\kappa$ such that $\op{Gal}(\kappa_n/\kappa)$ is isomorphic to $\Z/n\Z$. Recall that $p$ is an odd prime which is not necessarily distinct from $\ell$. Let $\Z_p$ be the ring of $p$-adic integers and set $\kappa_n^{(p)}:=\kappa_{p^n}$, $F_n^{(p)}:=F\cdot \kappa_n^{(p)}$. When there is no cause for confusion, we shall simply set $F_n$ to denote $F_n^{(p)}$. The tower of function field extensions \[F=F_0\subset F_1\subset F_2\subset \dots \subset F_n\subset \] is called the \emph{constant $\Z_p$-tower} over $F$. The union $F_\infty=\bigcup_n F_n$ is the \emph{constant $\Z_p$-extension} of $F$. We note that the Galois group $\Gamma:=\op{Gal}(F_\infty/F)$ is isomorphic to $\Z_p$. Furthermore, $\infty$ is totally inert in $F_\infty$. Let $L_n$ denote the $p$-Hilbert class field $H_p(F_n)$ and set $L_\infty:=\cup_{n\geq 1} L_n$. Denote by $X_n$ the Galois group $\op{Gal}(L_n/F_n)$ and set $\X:=\op{Gal}(L_\infty/F_\infty)$. The group $X_n$ is naturally identified with the $p$-primary class group $\op{Cl}_p(F_n)$. Given $m\geq n$, there is a natural map $X_m\rightarrow X_n$, which is given by the following composite
\[X_m\rightarrow \op{Gal}(L_n\cdot K_m/K_m)\rightarrow \op{Gal}(L_n/ L_n\cap K_m)\rightarrow X_m.\]
Since $\infty$ is completely split in the extension $L_n/K_n$ and it is completely inert in $K_m/K_n$, it follows that $L_n\cap K_m=K_n$. Therefore, the map $X_m\rightarrow X_n$ is surjective.

\par Denote by $\Gamma$ the Galois group $\op{Gal}(F_\infty/F)$ and fix a topological generator $\gamma$ in $\Gamma$. The Iwasawa algebra is defined as follows
\[\Lambda:=\varprojlim_n \Z_p[\Gamma/\Gamma^{p^n}].\] Setting $T$ to denote $(\gamma-1)$, we identify $\Lambda$ with the formal power series ring $\Z_p\llbracket T\rrbracket$. Let $G$ be the Galois group $\op{Gal}(L_\infty/F)$ and note that $G/\X$ is isomorphic to $\Gamma$. There is a natural action of $\Gamma$ on $\X$; for $g\in \Gamma$, pick a lift $\tilde{g}$ of $g$ to $G$, and for $x\in \X$, set $g\cdot x$ to denote $\tilde{g} x \tilde{g}^{-1}$. Note that $g\cdot x$ is contained in $\X$ since $\X$ is a normal subgroup of $G$, and also is independent of the choice of lift $\tilde{g}$ since $\X$ is abelian. Via this action, $\X$ is a module over the Iwasawa algebra $\Lambda$. 

\section{Iwasawa theory of constant $\Z_p$-extensions}\label{s 3}

\par In this section, we study the algebraic structure of the $\Lambda$-module $\X$ which was introduced in the previous section. Such results are analogous to the number field case, and in the function field case, some of the results may be known to experts. However, we find it of convenience to document them here. Let $D\subset G$ be the decomposition group of $\infty_F$. Since $\infty_F$ is totally inert in $F_\infty$ and $\infty_F$ is totally split in the extension $L_\infty/F_\infty$, the natural map $D\rightarrow \Gamma=G/X$ is an isomorphism. Hence, we find that $G=DX$.

\par Let $\sigma$ be the topological generator of $D$ which maps to $\gamma\in \Gamma$ w.r.t. the isomorphism $D\simeq \Gamma$.

\begin{lemma}
Let $G'$ be the closure of the commutator subgroup of $G$. Then, $G'=T\X$.
\end{lemma}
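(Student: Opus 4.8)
The plan is to exploit the fact that the projection $D\to\Gamma$ is an isomorphism in order to realize $G$ as a semidirect product, and then to reduce the identification of $G'$ to a single explicit commutator computation inside the abelian group $\X$. First I would record that, since $\infty_F$ is totally inert in $F_\infty$ and totally split in $L_\infty/F_\infty$, the composite $D\hookrightarrow G\twoheadrightarrow\Gamma$ is an isomorphism; in particular $D\cap\X=\{1\}$, and combined with $G=D\X$ and the normality of $\X$ this yields a (topological) semidirect product decomposition $G\cong\X\rtimes D$. Every element of $G$ is then uniquely of the form $x\,\sigma^a$ with $x\in\X$ and $a\in\Z_p$, where $\sigma^a$ denotes the corresponding element of $D\cong\Z_p$. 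By the very definition of the $\Lambda$-module structure, conjugation by $\sigma$ acts on $\X$ as multiplication by $\gamma=1+T$, so conjugation by $\sigma^a$ acts as multiplication by $(1+T)^a$.

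Since $G/\X\cong\Gamma$ is abelian, we immediately obtain $G'\subseteq\X$, so it remains only to locate the commutators inside $\X$. Writing $\X$ additively as a $\Lambda$-module, I would compute directly that for $x,y\in\X$ and $a,b\in\Z_p$ the commutator $[x\sigma^a,\,y\sigma^b]$ corresponds to the element
\[
\big((1+T)^a-1\big)\,y-\big((1+T)^b-1\big)\,x\in\X.
\]
The key observation is that $(1+T)^c-1\in T\Lambda$ for every $c\in\Z_p$; this starts from the integer identity $(1+T)^n-1=T\big((1+T)^{n-1}+\dots+1\big)$ and extends to all $c\in\Z_p$ by continuity, since $1+T\Lambda$ is closed. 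Hence every commutator lies in $T\X$; as $\X$ is compact, the continuous image $T\X$ is closed, so the closed subgroup $G'$ generated by the commutators satisfies $G'\subseteq T\X$.

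For the reverse inclusion I would specialize the displayed identity to $a=1$, $b=0$, $x=0$, which gives $[\sigma,y]=Ty$ for every $y\in\X$. Since $T\X$ is precisely the image of multiplication by $T$, i.e.\ the set $\{Ty:y\in\X\}$, every element of $T\X$ is realized as such a commutator, whence $T\X\subseteq G'$. Combining the two inclusions yields $G'=T\X$.

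The main obstacle, such as it is, lies not in the algebra but in the topological bookkeeping: one must verify that $(1+T)^a$ is well defined and lies in $1+T\Lambda$ for all $a\in\Z_p$ rather than merely $a\in\Z$, that the conjugation action genuinely extends continuously to these $\Z_p$-exponents, and that passing from the abstract commutator subgroup to its closure $G'$ does not enlarge the image beyond the already closed submodule $T\X$. Each of these is a consequence of the profinite and compactness properties of $\X$ and $\Gamma$, but they are the points that require care, whereas the commutator identity itself is routine.
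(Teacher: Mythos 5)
Your proposal is correct and follows essentially the same route as the paper: both exploit the decomposition $G=D\X$ with $D\simeq\Gamma$ (your semidirect product $\X\rtimes D$), compute the general commutator to see it lies in $T\X$ because elements of the form $(1+T)^c-1$ lie in the augmentation ideal $T\Lambda$, and obtain the reverse inclusion by specializing to commutators $[\sigma,y]=Ty$. Your extra attention to the topological points (closedness of $T\X$, continuity of the $\Z_p$-exponents) is a welcome refinement of details the paper leaves implicit, but it is not a different argument.
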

\begin{proof}
The proof is similar to that of \cite[Lemma 13.14]{washington1997introduction}; we provide details here. Note that an identification of $D$ with $\Gamma$ has been made. Let $a=\alpha x$ and $b=\beta y$ be elements of $G$, where $\alpha, \beta \in \Gamma$ and $x,y\in \X$. Given $\eta\in \Gamma$ and $v\in \X$, set $v^\eta$ to denote $\eta v \eta^{-1}$. A straightforward calculation shows that the commutator of $a$ and $b$ is given by
\[aba^{-1}b^{-1}=(x^\alpha)^{1-\beta} (y^\beta)^{\alpha-1}.\] Setting $\beta=1$ and $\alpha=\gamma$, we find that $T\cdot y=y^{\gamma-1}$ is contained in $G'$. Hence, we find that $T \X$ is contained in $G'$.

\par For the other inclusion, it suffices to observe that $(x^\alpha)^{1-\beta}$ and $(y^\beta)^{\alpha-1}$ are contained in $T \X$. This clear, since both $1-\beta$ and $\alpha-1$ are in the augmentation ideal of $\Lambda$, which is generated by $T$. 
\end{proof}

\begin{proposition}\label{control theorem}
With respect to notation above, we have the following isomorphism 
\[X_n\simeq \frac{\X}{\left((1+T)^{p^n}-1\right)\X}.\]
\end{proposition}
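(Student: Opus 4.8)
The plan is to adapt the classical control-theorem argument of \cite[Ch.~13]{washington1997introduction} to the constant $\Z_p$-tower, the key simplifications being that $F_\infty/F$ is everywhere unramified and that $\infty_F$ is totally inert in $F_\infty$ while being totally split in $L_\infty/F_\infty$. Set $\omega_n:=(1+T)^{p^n}-1$, so that $\omega_n=\gamma^{p^n}-1$, and write $Y_n:=\op{Gal}(L_\infty/F_n)$. First I would record the group-theoretic structure of $Y_n$: since $\op{Gal}(F_\infty/F_n)=\Gamma^{p^n}$ and $\op{Gal}(L_\infty/F_\infty)=\X$, one has a short exact sequence $1\to\X\to Y_n\to\Gamma^{p^n}\to1$, and because the decomposition group $D$ of $\infty_F$ in $G$ maps isomorphically onto $\Gamma$, its intersection $D_n:=D\cap Y_n=\overline{\langle\sigma^{p^n}\rangle}$ maps isomorphically onto $\Gamma^{p^n}$ and satisfies $Y_n=D_n\X$ with $D_n\cap\X=1$.

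Next I would compute the closure $Y_n'$ of the commutator subgroup of $Y_n$, repeating the calculation of the preceding Lemma verbatim but with $\gamma$ replaced by the topological generator $\gamma^{p^n}$ of $\Gamma^{p^n}$. Writing a general commutator of $a=\alpha x$, $b=\beta y$ (with $\alpha,\beta\in\Gamma^{p^n}$ and $x,y\in\X$) as $(x^\alpha)^{1-\beta}(y^\beta)^{\alpha-1}$, the choice $\alpha=\gamma^{p^n}$, $\beta=1$ produces $y^{\gamma^{p^n}-1}=\omega_n\cdot y$, so $\omega_n\X\subseteq Y_n'$; conversely, both $1-\beta$ and $\alpha-1$ lie in the ideal of $\Lambda$ generated by $\gamma^{p^n}-1=\omega_n$, whence $Y_n'\subseteq\omega_n\X$. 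Thus $Y_n'=\omega_n\X$. Letting $M_n:=L_\infty^{Y_n'}$ denote the maximal abelian extension of $F_n$ contained in $L_\infty$, this gives $\op{Gal}(M_n/F_n)=Y_n/\omega_n\X$ and hence $\op{Gal}(M_n/F_\infty)=\X/\omega_n\X$.

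It remains to identify $\op{Gal}(M_n/F_\infty)$ with $X_n$, and this is the step where the arithmetic of $\infty_F$ genuinely enters. Since $L_\infty/F_\infty$ and $F_\infty/F_n$ are both unramified, $M_n/F_n$ is an abelian unramified pro-$p$ extension; moreover $\infty_F$ splits completely in $M_n/F_\infty$ (as $M_n\subseteq L_\infty$) and is totally inert in $F_\infty/F_n$. Consequently the decomposition group $D_\infty\subseteq\op{Gal}(M_n/F_n)$ of $\infty_F$ is topologically cyclic, generated by a Frobenius that acts trivially over $F_\infty$ but generates $\op{Gal}(F_\infty/F_n)$; hence $D_\infty\cap\op{Gal}(M_n/F_\infty)=1$ while $D_\infty$ surjects onto $\op{Gal}(F_\infty/F_n)$, yielding an internal direct product $\op{Gal}(M_n/F_n)=D_\infty\times\op{Gal}(M_n/F_\infty)$. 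Finally, the fixed field $M_n^{D_\infty}$ is the maximal subextension of $M_n/F_n$ in which $\infty_F$ splits completely, which by maximality is precisely $L_n$; therefore $\op{Gal}(M_n/F_\infty)\simeq\op{Gal}(M_n/F_n)/D_\infty=\op{Gal}(L_n/F_n)=X_n$. Combining the two computations gives $X_n\simeq\X/\omega_n\X=\X/\left((1+T)^{p^n}-1\right)\X$.

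I expect the main obstacle to be this last identification: one must simultaneously verify that $M_n$ is unramified over $F_n$, that the two properties of $\infty_F$ (inert in the tower, split in the unramified part) force its decomposition group to split off as a direct factor isomorphic to $\Gamma^{p^n}$, and that the fixed field of that factor is exactly $L_n=H_p(F_n)$ rather than some proper subfield. The commutator computation of the second step, by contrast, is a routine transcription of the Lemma.
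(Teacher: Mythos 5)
Your proof is correct and is exactly the argument the paper intends: the paper's proof is a one-line appeal to \cite[Lemma 13.15]{washington1997introduction} ``with the role of the inertia group interchanged with the group $D$,'' and your write-up carries out precisely that adaptation, using that $\infty_F$ is totally inert in $F_\infty/F$ and totally split in $L_\infty/F_\infty$ to make $D_n$ play the role Washington's inertia group plays. The only difference is that you supply the details the paper omits, namely the commutator computation $Y_n'=\left((1+T)^{p^n}-1\right)\X$ and the identification of $L_n$ as the decomposition field of $\infty_F$ inside the maximal abelian subextension $M_n$ of $L_\infty/F_n$.
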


\begin{proof}
The proof is similar to that of \cite[Lemma 13.15]{washington1997introduction}, and the role of the inertia group in the number field context is interchanged with the group $D$.
\end{proof}

\begin{corollary}
With respect to notation above, $\X$ is a finitely generated $\Lambda$-module. 
\end{corollary}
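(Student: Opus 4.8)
The plan is to deduce that $\X$ is finitely generated over $\Lambda$ from the combination of Proposition~\ref{control theorem} together with the structure of $\Lambda=\Zp\lb T\rb$ as a complete local Noetherian ring. The key observation is that Proposition~\ref{control theorem} with $n=0$ identifies $X_0=\op{Cl}_p(F)$ with the quotient $\X/T\X$ (noting that $(1+T)^{p^0}-1=(1+T)-1=T$). Since $X_0$ is a finite group, being the $p$-primary part of the finite class group $\op{Cl}(F)$, we conclude that $\X/T\X$ is finite, and in particular finitely generated over $\Zp$.

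The main step is then to invoke the topological version of Nakayama's lemma for compact $\Lambda$-modules. First I would note that $\X=\varprojlim_n X_n$ is a compact $\Lambda$-module, being an inverse limit of the finite groups $X_n$ under the surjective transition maps established in Section~\ref{s 2}. The maximal ideal of the local ring $\Lambda$ is $\fM=(p,T)$, and a standard fact is that for a compact $\Lambda$-module $\X$, if $\X/\fM\X$ is finite (equivalently, finitely generated over the residue field $\Lambda/\fM\cong\F_p$), then $\X$ is finitely generated over $\Lambda$; moreover any set of elements whose images span $\X/\fM\X$ over $\F_p$ generates $\X$ topologically. Since we have already shown $\X/T\X$ is finite, the further quotient $\X/\fM\X=\X/(p,T)\X$ is a finite-dimensional $\F_p$-vector space, so Nakayama applies.

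I expect the main obstacle, or rather the main point requiring care, to be the justification of the compactness of $\X$ and the precise formulation of the topological Nakayama lemma being used. Concretely, I would verify that the profinite topology on $\X$ makes it a compact $\Lambda$-module and cite the standard reference (for instance Washington~\cite[Chapter~13]{washington1997introduction} or the general theory of finitely generated $\Lambda$-modules) for the statement that finiteness of $\X/\fM\X$ forces finite generation over $\Lambda$. An alternative, slightly more self-contained route avoiding explicit compactness arguments would be to choose lifts $x_1,\dots,x_d\in\X$ of an $\F_p$-basis of $\X/\fM\X$, let $M$ be the $\Lambda$-submodule they generate, and show $M=\X$ by verifying $\X=M+\fM^k\X$ for all $k$ via induction and then passing to the limit using the completeness of $\X$ with respect to the $\fM$-adic (equivalently, the profinite) topology; the inductive step reduces precisely to the surjectivity statement already available. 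Either way, the arithmetic input—that $\op{Cl}_p(F)$ is finite—is exactly what drives the conclusion, and Proposition~\ref{control theorem} is the bridge that converts this finiteness into the hypothesis of Nakayama's lemma.
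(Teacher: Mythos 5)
Your proposal is correct and follows essentially the same argument as the paper: apply Proposition~\ref{control theorem} with $n=0$ to identify $\X/T\X$ with the finite group $X_0$, and then conclude by (topological) Nakayama's lemma applied to the maximal ideal $\mathfrak{M}=(p,T)$. The extra care you take with compactness and the precise form of Nakayama's lemma is a sound elaboration of what the paper leaves implicit, not a different route.
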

\begin{proof}
By Nakayama's lemma, it suffices to show that $\X/\mathfrak{M}\X$ is finite, where $\mathfrak{M}=(p, T)$ is the maximal ideal of $\Lambda$. According to Proposition \ref{control theorem}, $\X/T\X$ is isomorphic to $X_0$, hence is finite. The result follows. 
\end{proof}

A map of $\Lambda$-modules is a \emph{pseudo-isomorphism} if its kernel and cokernel are finite. Given a finitely generated $\Lambda$-module $M$, there is a pseudo-isomorphism 
\[M\longrightarrow \Lambda^r \oplus \left(\bigoplus_{i=1}^s \frac{\Z_p\llbracket T\rrbracket}{p^{\mu_i}}\right)\oplus \left(\bigoplus_{i=1}^t \frac{\Z_p\llbracket T\rrbracket}{f_i(T)^{\lambda_i}}\right).\]
Here $f_i(T)$ are irreducible and distinguished polynomials in $\Z_p\llbracket T\rrbracket$. The $\mu$ and $\lambda$-invariants are defined by 
\[\text{setting }\mu(M):=\begin{cases} \sum_i \mu_i\text{ if }s>0,\\
0\text{ if }s=0,
\end{cases}
\text{ and, }
\lambda(M):=\begin{cases} \sum_i \lambda_i\op{deg}(f_i)\text{ if }t>0,\\
0\text{ if }t=0.
\end{cases}\]
We denote by $\mu_p(F)$ and $\lambda_p(F)$ the $\mu$ and $\lambda$-invariants of $\X$.

\begin{proposition}
Let $\mu=\mu_p(F)$ and $\lambda=\lambda_p(F)$, then there is an integer $\nu$ such that 
\[e_n=p^n \mu +n \lambda+\nu\] for all values $n\gg 0$. 
\end{proposition}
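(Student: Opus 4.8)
The plan is to establish the asymptotic formula $e_n = p^n\mu + n\lambda + \nu$ by combining the control theorem of Proposition \ref{control theorem} with the standard structure theory of finitely generated $\Lambda$-modules. Recall that $e_n$ is defined by $p^{e_n} = h_p(F_n) = \#\op{Cl}_p(F_n) = \# X_n$, and that Proposition \ref{control theorem} gives the identification
\[
X_n \simeq \frac{\X}{\left((1+T)^{p^n}-1\right)\X}.
\]
Writing $\omega_n(T) := (1+T)^{p^n}-1$, the quantity we must control is therefore $e_n = \log_p\#\bigl(\X/\omega_n(T)\X\bigr)$. The strategy is to show that the $p$-adic length of this quotient is eventually a polynomial of the predicted shape, by passing through a pseudo-isomorphism to the elementary module $E := \Lambda^r \oplus \left(\bigoplus_{i=1}^s \Lambda/p^{\mu_i}\right)\oplus \left(\bigoplus_{i=1}^t \Lambda/f_i(T)^{\lambda_i}\right)$.

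First I would argue that $r = 0$, i.e.\ that $\X$ is $\Lambda$-torsion. Indeed, by the Corollary above $\X$ is finitely generated, and Proposition \ref{control theorem} shows that $X_0 \simeq \X/T\X$ is finite; if $\X$ had positive $\Lambda$-rank then $\X/\omega_n(T)\X$ would be infinite (the free part $\Lambda^r$ contributes a quotient $\bigl(\Lambda/\omega_n\bigr)^r$ of $\Z_p$-rank $rp^n$, but more to the point the rank would already force $\X/T\X$ to be infinite), contradicting finiteness of $X_0$. Hence $r = 0$ and $\X$ is a torsion $\Lambda$-module, so $e_n$ is finite for every $n$. Next I would reduce the computation from $\X$ to its elementary quotient $E$ via the pseudo-isomorphism $\X \to E$. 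The kernel and cokernel are finite, and $\omega_n(T)$ is an element of $\Lambda$ with no repeated factor among the distinguished polynomials appearing in the elementary divisors; using the snake lemma one shows that $\bigl|\log_p\#(\X/\omega_n\X) - \log_p\#(E/\omega_n E)\bigr|$ is bounded independently of $n$, in fact is eventually constant. This is the step where I expect the main technical care to be required: one must verify that multiplication by $\omega_n(T)$ is injective on the relevant modules for $n$ large (so that the kernel terms stabilize) and that the finite error terms from the pseudo-isomorphism do not grow with $n$.

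It then remains to compute $\log_p\#(E/\omega_n E)$ summand by summand. For a summand $\Lambda/p^{\mu_i}$, the quotient by $\omega_n(T)$ is $\bigl(\Z/p^{\mu_i}\bigr)[T]/(\omega_n(T))$, which has $p^{\mu_i \cdot p^n}$ elements since $\omega_n(T)$ is distinguished of degree $p^n$; summing over $i$ contributes $p^n\mu$ to $e_n$. For a summand $\Lambda/f_i(T)^{\lambda_i}$ with $f_i$ distinguished of degree $d_i$, once $n$ is large enough the polynomial $f_i(T)^{\lambda_i}$ divides $\omega_n(T)$ in $\Lambda$ (because $\omega_{n}/\omega_{n-1}$ is a distinguished polynomial coprime to $f_i$ for $n \gg 0$, so the power of $f_i$ in $\omega_n$ stabilizes at $\geq \lambda_i$), whence $\log_p\#\bigl(\Lambda/(f_i^{\lambda_i},\omega_n)\bigr) = \lambda_i d_i$ is independent of $n$; summing gives the constant contribution $\lambda$ together with finitely many corrections absorbed into $\nu$. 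Collecting these, $e_n = p^n\mu + n\lambda + \nu$ for some integer $\nu$ and all $n \gg 0$. The one genuinely delicate point throughout is the uniform boundedness of the pseudo-isomorphism error, which I would handle exactly as in the classical number-field argument of \cite[Chapter 13]{washington1997introduction}, noting that the only structural input special to the function-field setting — namely the identification of the decomposition group $D$ with $\Gamma$ and the resulting control isomorphism — has already been supplied by the Lemma and Proposition above.
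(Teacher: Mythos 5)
Your overall strategy --- combine Proposition \ref{control theorem} with the structure theorem for finitely generated $\Lambda$-modules, pass to the elementary module via a pseudo-isomorphism whose error is eventually constant, and compute $\#(E/\omega_n E)$ summand by summand --- is exactly the route behind the paper's proof, which simply cites \cite[Proposition 13.19, Lemma 13.21]{washington1997introduction} together with Proposition \ref{control theorem}. Your treatment of the free part (forcing $r=0$ from finiteness of $X_0 \simeq \X/T\X$) and of the summands $\Lambda/p^{\mu_i}$ (each contributing $\mu_i p^n$ by Weierstrass division modulo $p^{\mu_i}$) is correct.

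However, your computation for the summands $\Lambda/f_i(T)^{\lambda_i}$ is genuinely wrong, and it is precisely the step that must produce the term $n\lambda$. You claim that for $n \gg 0$ the polynomial $f_i^{\lambda_i}$ divides $\omega_n(T)=(1+T)^{p^n}-1$, and conclude that $\log_p\#\bigl(\Lambda/(f_i^{\lambda_i},\omega_n)\bigr)=\lambda_i d_i$ is eventually constant. This is backwards on two counts. First, if $f_i$ divided some $\omega_n$, then $(f_i^{\lambda_i},\omega_n)\subseteq (f_i)$, so $\Lambda/(f_i^{\lambda_i},\omega_n)$ would surject onto $\Lambda/(f_i)$, a free $\Z_p$-module of positive rank; the quotient would be \emph{infinite}, and via the pseudo-isomorphism $X_n$ itself would be infinite, contradicting finiteness of the class group. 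So finiteness of all the $X_n$ forces $f_i$ to be coprime to \emph{every} $\omega_n$: the stabilized power of $f_i$ in $\omega_n$ is $0$, not $\geq\lambda_i$. Second, with the correct coprimality the order of $\Lambda/(f_i^{\lambda_i},\omega_n)$ is \emph{not} eventually constant but grows linearly in the exponent: writing $g=f_i^{\lambda_i}$, distinguished of degree $d=\lambda_i d_i$, one has $\#\bigl(\Lambda/(g,\omega_n)\bigr)=p^{v_p(\op{Res}(g,\omega_n))}$ and $\op{Res}(g,\omega_n)=\prod_{\zeta^{p^n}=1}g(\zeta-1)$; since the roots $\alpha$ of $g$ satisfy $|\alpha|_p<1$ while $|\zeta-1|_p\rightarrow 1$ as the order of $\zeta$ grows, each level of primitive $p^k$-th roots of unity with $k\gg 0$ contributes exactly $d$ to the valuation, giving $\#\bigl(\Lambda/(g,\omega_n)\bigr)=p^{dn+c}$ for $n\gg 0$. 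Summing over $i$ is what yields the term $n\lambda$. As written, your computation yields $e_n=p^n\mu+O(1)$, and the $n\lambda$ in your final formula appears out of nowhere; the proof does not establish the proposition until this step is repaired, which is exactly the content of \cite[Lemma 13.21]{washington1997introduction} that the paper invokes.
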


\begin{proof}
The result follows directly from \cite[Proposition 13.19, Lemma 13.21]{washington1997introduction} and Proposition \ref{control theorem}.
\end{proof}

The following result of Leitzel and Rosen shows that the $\mu$-invariant is always equal to $0$ for constant $\Z_p$-extensions.

\begin{theorem}[Leitzel, Rosen]
With respect to notation above, the module $\X$ is torsion over $\Lambda$, with $\mu_p(F)=0$.
\end{theorem}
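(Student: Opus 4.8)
The plan is to show that $\X$ is $\Lambda$-torsion and that its $\mu$-invariant vanishes, both of which should follow from a uniform bound on the $p$-rank of the finite class groups $X_n$ across the tower. The essential structural input, which distinguishes the function field case from the number field case, is that the constant $\Z_p$-extension $F_\infty/F$ is obtained by extending the field of constants only. Geometrically, each $F_n$ is the function field of the same curve $\X$ base-changed from $\kappa$ to $\kappa_n^{(p)}$, so the class group $\op{Cl}_p(F_n)$ is governed by the $p$-part of the group of $\kappa_n^{(p)}$-points of the Jacobian of $\X$. I would therefore first record the identification of $X_n$ with $\op{Cl}_p(F_n)$ established in Section \ref{s 2}, and then invoke the Weil bound: the class number $h(F_n)$ equals the number of $\kappa_n^{(p)}$-rational points on the Jacobian, which by the Riemann hypothesis for curves over finite fields satisfies $h(F_n) = \prod_{i=1}^{2g}(1-\omega_i^{p^n})$, where $g$ is the genus of $\X$ and the $\omega_i$ are the Frobenius eigenvalues with $|\omega_i| = q^{1/2}$.

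\emph{Torsion.} To see that $\X$ is $\Lambda$-torsion it suffices, by the structure theory recalled above, to show that the free rank $r$ is zero, equivalently that $\op{rank}_{\Z_p} X_n$ stays bounded as $n\to\infty$ rather than growing linearly in $p^n$. But $X_n = \op{Cl}_p(F_n)$ is a subgroup of the finite group $\op{Cl}(F_n)$, and the full class group $\op{Cl}(F_n)$ is isomorphic to the group of degree-zero divisor classes, which has the same cardinality $h(F_n)$ for every $n$; crucially its $p$-rank is bounded by $2g$, the dimension of the Jacobian, uniformly in $n$. (The Jacobian is an abelian variety of fixed dimension $g$, so its $p$-torsion has $\F_p$-rank at most $2g$ regardless of the constant field.) Hence $\dim_{\F_p} X_n/pX_n \leq 2g$ for all $n$, which by Proposition \ref{control theorem} forces the free rank of $\X$ to vanish: a nonzero free part would make $\op{rank}_{\Z_p} X_n$ grow like $r\cdot p^n$.

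\emph{Vanishing of $\mu$.} The same uniform $p$-rank bound handles the $\mu$-invariant. If $\mu_p(F) > 0$, then by the preceding displayed growth formula $e_n = p^n\mu + n\lambda + \nu$, and the power of $p$ dividing $h_p(F_n)$ would grow like $p^n\mu$, i.e. $h_p(F_n)$ itself would grow doubly exponentially in $n$. But from the Weil formula $h(F_n) = \prod_{i=1}^{2g}(1-\omega_i^{p^n})$ we read off $h(F_n) \leq (1+q^{p^n/2})^{2g}$, so $\op{ord}_p h(F_n) = e_n$ grows at most like $2g\cdot\log_p(1+q^{p^n/2}) = O(p^n)$ \emph{linearly} in the exponent $p^n$—precisely of order $p^n$, not $p^n$ times a growing factor. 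Comparing with $e_n = p^n\mu + n\lambda + \nu$ shows $\mu$ is finite and in fact the uniform $p$-rank bound $\dim_{\F_p}(\X/(p,T)\X) \leq \dim_{\F_p}(X_0/pX_0) \leq 2g$ together with a pseudo-isomorphism argument pins $\mu$ to zero: a summand $\Z_p\llbracket T\rrbracket/(p^{\mu_i})$ with $\mu_i \geq 1$ would contribute an unbounded $p$-rank to the quotients $\X/((1+T)^{p^n}-1)\X \cong X_n$, contradicting $\op{rank}_p X_n \leq 2g$.

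\emph{Main obstacle.} The delicate point is the vanishing of $\mu$ rather than mere torsionness. For the number field cyclotomic tower the analogous statement (Ferrero–Washington) is a genuine theorem requiring the non-vanishing of certain $p$-adic $L$-values, but here the constant nature of the extension collapses the difficulty: because Frobenius acts on $\X$ in a way compatible with the constant field extension, one can argue directly that $\X$ is \emph{finitely generated over $\Z_p$} (not merely over $\Lambda$), which immediately forces $\mu = 0$. I expect the cleanest route is to show $\X$ is finitely generated as a $\Z_p$-module by exhibiting $\X/T\X \cong X_0$ as finite and using that multiplication by $T = \gamma - 1$ acts topologically nilpotently because $\gamma$ acts through a finite-order-compatible constant Frobenius; verifying this $\Z_p$-finiteness carefully, and deducing $\mu = 0$ from it, is the technical heart of the argument and is where I would focus the detailed work.
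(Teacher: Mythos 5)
The paper itself offers no proof of this theorem: it is stated with attribution to Leitzel and Rosen (\cite{leitzel1970class}, \cite[Chapter 11]{rosen2002number}), so the comparison is against the classical argument in that cited literature. Your proposal is, in essence, that classical argument, and its core is sound: because $F_\infty/F$ is a constant extension, every $F_n$ is the function field of one fixed curve of genus $g$ base-changed to $\kappa_n^{(p)}$, so $\op{Cl}(F_n)$ is identified with the group of $\kappa_n^{(p)}$-points of the Jacobian (Lang's theorem kills the obstruction to this identification over finite fields), whence the $p$-rank of $X_n$ is bounded by $2g$ uniformly in $n$ (by $g$ when $p=\ell$, which the paper allows --- your bound of $2g$ still holds). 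Combined with Proposition \ref{control theorem} and the structure theorem, this uniform bound kills both a free summand $\Lambda^r$ (which would force $X_n$ to be infinite) and a summand $\Lambda/(p^{\mu_i})$ (which would force $\dim_{\F_p} X_n/pX_n$ to grow like $p^n$, since $\Lambda/(p,(1+T)^{p^n}-1)$ has $\F_p$-dimension $p^n$, and a pseudo-isomorphism perturbs these ranks by only a bounded amount). Equivalently, as your final paragraph suggests, the uniform rank bound together with the surjectivity of the transition maps shows that $\X=\varprojlim_n X_n$ is finitely generated as a $\Z_p$-module, which immediately yields both torsionness and $\mu_p(F)=0$; this is precisely Rosen's formulation of the proof.

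Two local criticisms. First, the Weil-bound estimate in your $\mu$ paragraph proves nothing: the conclusion $e_n=O(p^n)$ is perfectly consistent with $\mu_p(F)>0$, since $p^n\mu$ is itself $O(p^n)$, and the assertion that this ``shows $\mu$ is finite'' is vacuous ($\mu$ is finite by definition). That passage should be deleted; the $p$-rank argument that follows it is the one that actually works, and it suffices. Second, in the last paragraph, the appeal to $T$ acting ``topologically nilpotently because $\gamma$ acts through a finite-order-compatible constant Frobenius'' is not a meaningful justification of $\Z_p$-finite generation; the correct deduction is a Nakayama/compactness argument from the uniform bound $\dim_{\F_p}\left(X_n/pX_n\right)\leq 2g$ and the surjectivity of the maps $X_m\rightarrow X_n$, with no further input about Frobenius needed.
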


\begin{lemma}\label{injectivity lemma}
Let $\op{Cl}_p(F_n)\rightarrow \op{Cl}_p(F_m)$ be the natural map for $m\geq n$. This map is injective.
\end{lemma}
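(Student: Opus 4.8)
The plan is to bypass the Galois-theoretic machinery used for the earlier results and argue geometrically, identifying each $p$-class group with the $p$-primary part of the rational points of a single fixed abelian variety. Let $C$ denote the smooth, projective, geometrically integral curve over $\kappa=\F_q$ whose function field is $F$, and let $J:=\op{Jac}(C)$ be its Jacobian, an abelian variety defined over $\kappa$. For each $n$, the field $F_n=F\cdot \kappa_n^{(p)}$ is the function field of the base change $C_n:=C\times_\kappa \kappa_n^{(p)}$, which remains geometrically integral over its constant field $\kappa_n^{(p)}=\F_{q^{p^n}}$. Since $\op{Cl}(F_n)$ is by definition the group of degree-zero divisor classes, it coincides with $\op{Pic}^0(C_n)$.

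First I would establish the identification $\op{Cl}(F_n)\simeq J\!\left(\kappa_n^{(p)}\right)$ in a way that keeps track of functoriality in $n$. Writing $G_n:=\op{Gal}(\bar\kappa/\kappa_n^{(p)})$, the five-term exact sequence of the Hochschild--Serre spectral sequence for $\mathbb{G}_m$ on $C_n$ reads
\[0\to \op{Pic}(C_n)\to \op{Pic}(C_{\bar\kappa})^{G_n}\to \op{Br}\!\left(\kappa_n^{(p)}\right),\]
where the first term of the sequence, $H^1(G_n,\bar\kappa^\times)$, vanishes by Hilbert 90, and where we use that global units on the proper geometrically integral curve $C_{\bar\kappa}$ are constants. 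As $\kappa_n^{(p)}$ is finite, $\op{Br}(\kappa_n^{(p)})=0$, so the left map is an isomorphism; restricting to degree zero gives $\op{Pic}^0(C_n)\simeq J(\bar\kappa)^{G_n}=J(\kappa_n^{(p)})$. (Alternatively, one may simply invoke the standard identification of the degree-zero class group with the Jacobian's rational points, cf. \cite[Chapter 11]{rosen2002number}.)

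With these identifications in hand, the natural map $\op{Cl}(F_n)\to \op{Cl}(F_m)$ induced by extension of divisor classes becomes the map $J(\bar\kappa)^{G_n}\hookrightarrow J(\bar\kappa)^{G_m}$. For $m\geq n$ we have $\kappa_n^{(p)}\subseteq \kappa_m^{(p)}$ and hence $G_m\subseteq G_n$, so the latter is the inclusion of one Galois-fixed subgroup into a larger one, which is manifestly injective. Passing to $p$-Sylow subgroups preserves injectivity, and under the identification $\op{Cl}_p(F_n)\simeq X_n$ this yields the assertion.

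The step requiring the most care is the \emph{compatibility} claim: that the map named in the lemma is the extension-of-ideals map (pullback of divisor classes), and that this corresponds on Jacobians to the inclusion of fixed points rather than to the norm map. The norm map runs in the opposite direction and is exactly the surjection $X_m\twoheadrightarrow X_n$ recorded earlier via $L_n\cap F_m=F_n$; conversely, the extension map corresponds under Artin reciprocity to a transfer homomorphism, whose injectivity would be awkward to verify purely group-theoretically. It is precisely the geometric reformulation that renders the injectivity transparent, so confirming that the two functorialities match is the one point that deserves a careful sentence.
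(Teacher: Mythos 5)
Your proof is correct, but it takes a genuinely different route from the paper. The paper's proof is the classical capitulation-kernel argument carried out at finite level: for a class $[D]$ in the kernel one writes $D=\op{div}(\alpha)$ with $\alpha\in F_m$, checks that $\sigma\mapsto \alpha^\sigma/\alpha$ defines an injection of the kernel into $H^1\left(F_m/F_n,(\kappa_m^{(p)})^\times\right)$, and kills this group by Hilbert 90 after identifying $\op{Gal}(F_m/F_n)$ with $\op{Gal}(\kappa_m^{(p)}/\kappa_n^{(p)})$ (linear disjointness, which uses that $\infty$ is totally ramified in $F$ and unramified in constant extensions). Your argument instead identifies $\op{Cl}(F_n)$ with $\op{Pic}^0(C_n)$ and, via Hochschild--Serre and the vanishing of Brauer groups of finite fields, with $J(\kappa_n^{(p)})$, after which the extension map becomes the manifestly injective inclusion $J(\bar\kappa)^{G_n}\subseteq J(\bar\kappa)^{G_m}$. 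The two proofs share the same engine --- Hilbert 90 for a constant extension, plus the fact that a function with trivial divisor on a proper, geometrically integral curve is a constant --- but the packaging differs, and each buys something: the paper's computation is elementary and self-contained, while yours proves more than the lemma asks, since the identification $\op{Cl}_p(F_n)\simeq J(\F_{q^{p^n}})[p^\infty]$ exhibits the entire Iwasawa module as $\varinjlim_n J(\F_{q^{p^n}})[p^\infty]$ and makes finiteness and the vanishing of $\mu$ transparent as well. Two small remarks. First, for injectivity alone you do not need the Brauer-group (surjectivity) half of your identification: $H^1(G_n,\bar\kappa^\times)=0$ already makes $\op{Pic}(C_n)\rightarrow\op{Pic}(C_{\bar\kappa})$ injective, and the map in the lemma factors through $\op{Pic}(C_m)$. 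Second, the compatibility point you flag is real but is dispatched in one line: the map in the lemma is extension (pullback) of divisor classes --- as the paper's own proof confirms by taking $\alpha\in F_m$, not the norm direction used for the surjection $X_m\twoheadrightarrow X_n$ --- and since the identification $\op{Pic}(C_n)\hookrightarrow\op{Pic}(C_{\bar\kappa})^{G_n}$ is itself a pullback, the required square commutes by functoriality of pullback.
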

\begin{proof}
Recall that $F_n$ was the composite $F\cdot \kappa_n^{(p)}=F\cdot \kappa_n^{(p)}(T)$. Since $\infty$ is unramified in $\bar{\kappa}(T)$ and totally ramified in $F$, it follows that $F$ and $\kappa_m^{(p)}(T)$ are linearly disjoint over $\kappa_n^{(p)}(T)$. Therefore, there are isomorphisms
\[\op{Gal}(F_m/F_n)\simeq \op{Gal}\left(\kappa_m^{(p)}(T)/\kappa_n^{(p)}(T)\right)\simeq \op{Gal}(\kappa_m^{(p)}/\kappa_n^{(p)}).\]Therefore (by Hilbert's theorem 90), we have that $H^1\left(F_m/F_n, (\kappa_m^{(p)})^\times\right)=0$. It suffices to show that the kernel of $\op{Cl}_p(F_n)\rightarrow \op{Cl}_p(F_m)$ is contained in $H^1\left(F_m/F_n, (\kappa_m^{(p)})^\times\right)$.

\par Let us define a map 
\[\Phi: \op{ker}\left(\op{Cl}_p(F_n)\rightarrow \op{Cl}_p(F_m)\right)\longrightarrow H^1\left(F_m/F_n, (\kappa_m^{(p)})^\times\right).\]

Given a divisor class $[D]$ in $\op{ker}\left(\op{Cl}_p(F_n)\rightarrow \op{Cl}_p(F_m)\right)$ represented by a divisor $D$, and $\sigma\in \op{Gal}(F_m/F_n)$, we find that $\sigma [D]-[D]=0$. We have that $D=\op{div}(\alpha)$ for a non-zero element $\alpha\in F_m$. Therefore, $\alpha^\sigma/\alpha$ has trivial divisor, and hence lies in $\cO_{F_m}^\times=(\kappa_m^{(p)})^{\times}$. We set $\Phi(D)(\sigma):=\alpha^\sigma/\alpha$ and show that $\Phi$ is injective. Suppose that $\Phi(D)=0$. Then, there is $\epsilon\in (\kappa_m^{(p)})^{\times}$ such that for all $\sigma\in \op{Gal}(F_m/F_n)$, 
\[\alpha^\sigma/\alpha=\epsilon^\sigma/\epsilon.\] Replacing $\alpha$ by $\alpha/\epsilon$, we find that $\alpha^\sigma=\alpha$ for all $\sigma$. Thus, $\alpha$ must belong to $F_n$, and the ideal class of $D$ in $\op{Cl}_p(F_n)$ is trivial.
\end{proof}

\begin{theorem}\label{finite submodules}
The module $\X$ does not contain any non-trivial and finite $\Lambda$-modules. 
\end{theorem}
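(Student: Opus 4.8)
The plan is to mimic the classical argument from Iwasawa theory (cf. \cite[Proposition 13.28]{washington1997introduction}) showing that the unramified Iwasawa module has no nonzero finite $\Lambda$-submodule, adapting it to the function field setting where the role of inertia is played by the decomposition group $D$ of $\infty_F$. The key structural input is that $\infty_F$ is totally inert in $F_\infty/F$ and totally split in $L_\infty/F_\infty$, so that the natural map $D \to \Gamma$ is an isomorphism and $G = D\X$, together with Lemma \ref{injectivity lemma}, which furnishes injectivity of the transition maps $\op{Cl}_p(F_n) \to \op{Cl}_p(F_m)$ at the level of class groups.

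First I would suppose for contradiction that $Y \subseteq \X$ is a nonzero finite $\Lambda$-submodule, and I would aim to contradict the injectivity established in Lemma \ref{injectivity lemma}. The strategy is to relate the transition maps $\op{Cl}_p(F_n) \to \op{Cl}_p(F_m)$ to the algebraic structure of $\X$ via the control theorem (Proposition \ref{control theorem}). Recall that $X_n \simeq \X / \left((1+T)^{p^n}-1\right)\X$. The transition map $X_m \to X_n$ is the natural surjection of quotients, but the arithmetic map on class groups runs the other way, as an injection $X_n \hookrightarrow X_m$ by Lemma \ref{injectivity lemma}. I would identify this arithmetic injection with a norm-type (multiplication) map $\nu_{m,n} = \left((1+T)^{p^m}-1\right)/\left((1+T)^{p^n}-1\right)$ acting on $\X$, passing to the quotients. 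The heart of the argument is then to show that if $\X$ had a nonzero finite $\Lambda$-submodule $Y$, this would force a nonzero element in the kernel of some transition map on class groups for $m \gg n$, contradicting Lemma \ref{injectivity lemma}.

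More concretely, I expect the following chain of reasoning. Since $\Gamma \simeq \Z_p$ acts on the finite group $Y$ through a finite quotient, $\gamma^{p^N}$ acts trivially on $Y$ for some $N$, i.e. $\left((1+T)^{p^N}-1\right)Y = 0$, so $Y$ is annihilated by a distinguished polynomial and $Y \cap \left((1+T)^{p^n}-1\right)\X$ must be controlled for $n \geq N$. I would analyze the image of $Y$ under the composite $\X \to X_n$ and use that multiplication by $\nu_{m,n}$ kills the finite submodule for large $m$ (because on a finite module the operator $\nu_{m,n}$ stabilizes and eventually acts as multiplication by $p^{m-n}$ up to units, hence by $0$ on $p$-power torsion once $m-n$ is large). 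This produces a nonzero class in $X_n$ that maps to zero in $X_m$ under the arithmetic transition map, contradicting injectivity.

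The main obstacle I anticipate is bookkeeping the precise compatibility between the \emph{algebraic} transition maps of the control theorem (surjections $X_m \twoheadrightarrow X_n$) and the \emph{arithmetic} maps on class groups $\op{Cl}_p(F_n) \hookrightarrow \op{Cl}_p(F_m)$ from Lemma \ref{injectivity lemma}, and in verifying that the arithmetic injection is indeed realized by the multiplication operator $\nu_{m,n}$ on $\X$ under the identification $X_n \simeq \X/((1+T)^{p^n}-1)\X$. Once this dictionary is set up correctly, the contradiction with a nonzero finite submodule follows by the standard argument that $\nu_{m,n}$ annihilates finite modules for $m \gg n$; the delicate point is ensuring no finite piece survives in the inverse limit, which is exactly where the injectivity of Lemma \ref{injectivity lemma} is indispensable.
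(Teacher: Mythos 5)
Your proposal follows the same route as the paper: the paper's own proof is a one-line citation of \cite[Proposition 13.28]{washington1997introduction} together with Lemma \ref{injectivity lemma}, and the engine of that argument is precisely the combination you describe --- the control theorem (Proposition \ref{control theorem}), the identification of the upward (ideal-extension) maps $\op{Cl}_p(F_n)\to\op{Cl}_p(F_m)$ with multiplication by $\nu_{m,n}:=\omega_m/\omega_n$ (where $\omega_n:=(1+T)^{p^n}-1$) under that identification, which is the standard transfer-map computation of class field theory and is valid here with the decomposition group $D$ of $\infty_F$ playing the role of inertia --- and the observation that $\nu_{m,n}$ acts as multiplication by $p^{m-n}$ (exactly, not merely up to units) on any module annihilated by $\omega_n$, hence annihilates a finite $p$-group once $m-n$ is large.

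However, your concluding step has a genuine gap. What your argument actually proves is: if $Y\subseteq\X$ is a finite $\Lambda$-submodule and $\omega_n Y=0$, then for $m\gg n$ the image $\bar{Y}_n$ of $Y$ in $X_n\simeq\X/\omega_n\X$ lies in $\ker(X_n\to X_m)$, which is zero by Lemma \ref{injectivity lemma}; hence $Y\subseteq\omega_n\X$. That is not a contradiction: you assert this ``produces a nonzero class in $X_n$,'' but nothing you have said rules out that $Y\subseteq\omega_n\X$ for \emph{every} $n$, in which case injectivity is never violated. The missing ingredient is the vanishing $\bigcap_n\omega_n\X=0$, and contrary to your closing remark, this is not where Lemma \ref{injectivity lemma} enters; it is an independent fact. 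It can be supplied in two standard ways: (i) Galois-theoretically, $\X=\op{Gal}(L_\infty/F_\infty)=\varprojlim_n X_n$, and the projections $\X\to X_n$ are exactly the maps $\X\to\X/\omega_n\X$ of Proposition \ref{control theorem}, so $\bigcap_n\omega_n\X$, being the kernel of $\X\to\varprojlim_n\X/\omega_n\X$, is zero; or (ii) algebraically, $\omega_n=T\cdot\prod_{k=1}^{n}\bigl(\omega_k/\omega_{k-1}\bigr)\in\mathfrak{M}^{n+1}$ where $\mathfrak{M}=(p,T)$, since each factor $\omega_k/\omega_{k-1}$ lies in $\mathfrak{M}$, whence $\bigcap_n\omega_n\X\subseteq\bigcap_k\mathfrak{M}^k\X=0$ by Krull's intersection theorem applied to the finitely generated module $\X$ over the Noetherian local ring $\Lambda$. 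With this in hand the proof closes correctly: if $Y\neq 0$, pick $N$ with $\omega_N Y=0$; since the submodules $\omega_n\X$ decrease and intersect to zero, there is $n\geq N$ with $Y\not\subseteq\omega_n\X$, so $\bar{Y}_n\neq 0$, and for $m\gg n$ this nonzero subgroup lies in $\ker(X_n\to X_m)$, contradicting Lemma \ref{injectivity lemma}.
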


\begin{proof}
The result follows from Lemma \ref{injectivity lemma} by an argument identical to that of \cite[Proposition 13.28]{washington1997introduction}.
\end{proof}

\section{Proof of the main results}\label{s 4}
\par Let $p$ and $\ell$ be prime numbers and assume that $p$ is odd. Let $q$ be a power of $\ell$ and $\F_q$ be the finite field with $q$ elements. Denote by $\F_q(T)$ the field of rational functions in one variable over $\F_q$. In this section, we recall the main result of \cite{ellenberg2016homological} and derive consequences for the distribution of the $\lambda$-invariant $\lambda_p(F)$ for the family of imaginary quadratic field extensions $F$ of $\F_q(T)$.

\par Let $\cF$ be a family of imaginary quadratic function field extensions of $\F_q(T)$ ordered by disciminant. For $x\in \mathbb{R}_{>0}$, set $\cF_q(x)$ to be the subset of $\cF$ consisting of imaginary quadratic fields with discriminant $\leq x$. Let $A$ be a finite abelian $p$-group, and denote by $\cF_q(x;A)$ the subset of $\cF(x)$ consisting of imaginary quadratic fields $F$ with $\op{Cl}_p(F)$ isomorphic to $A$. Set $\delta^{\pm}_A(q)$ to denote the upper and lower densities defined as follows
\[\delta^+_A(q):=\limsup_{x\rightarrow \infty} \frac{\# \cF_q(x;A)}{\# \cF_q(x)}\text{ and }\delta^-_A(q):=\liminf_{x\rightarrow \infty} \frac{\# \cF_q(x;A)}{\# \cF_q(x)}.\]

\begin{theorem}[Ellenberg-Venkatesh-Westerland]\label{EVW thm}
Let $p$ be an odd prime and $A$ be a finite abelian $p$-group. Then, as $q\not \equiv 1\mod{p}$ goes to $\infty$, the densities $\delta^+(q)$ and $\delta^-(q)$ both converge to $\frac{\prod_{i\geq 1}\left(1-q^{-i}\right)}{\# \op{Aut}(A)}$.
\end{theorem}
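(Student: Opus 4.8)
The statement is the theorem of Ellenberg--Venkatesh--Westerland \cite{ellenberg2016homological}, and I would only reproduce the shape of their argument, whose engine is a translation of the arithmetic statistics into the topology of Hurwitz spaces followed by point counting over $\F_q$. The plan is first to pass from the distribution of $\op{Cl}_p(F)$ to its \emph{moments}: for a finite abelian $p$-group $A$, one studies the average
\[M_A(q):=\lim_{x\to\infty}\frac{1}{\#\cF_q(x)}\sum_{F\in \cF_q(x)} \#\op{Sur}\bigl(\op{Cl}_p(F),A\bigr).\]
Since a Cohen--Lenstra type distribution on finite abelian $p$-groups is determined by its moments, it suffices to show that each $M_A(q)$ converges, as $q\not\equiv 1\bmod p$ tends to infinity, to the corresponding moment of the target measure, and then to invoke a moment-uniqueness result for measures on finite abelian $p$-groups to recover convergence of the densities $\delta^+_A(q)$ and $\delta^-_A(q)$ to the claimed value $\tfrac{\prod_{i\geq 1}(1-q^{-i})}{\#\op{Aut}(A)}$.

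The second step is to reinterpret $\sum_F \#\op{Sur}(\op{Cl}_p(F),A)$ geometrically. A surjection $\op{Cl}_p(F)\twoheadrightarrow A$ corresponds to an unramified abelian $p$-extension of $F$ with Galois group $A$; incorporating the quadratic structure of $F/\F_q(T)$, such data are classified by $\F_q$-points on a Hurwitz space $\op{Hur}_{G,n}$ parametrizing branched covers of $\mathbf{P}^1$ with monodromy in the generalized dihedral group $G\cong A\rtimes(\Z/2\Z)$ (with $\Z/2\Z$ acting by inversion) and $n$ branch points, the branch locus being the ramified primes of $F/\F_q(T)$. Thus $M_A(q)$ is, up to the normalization by the count of quadratic fields, a ratio of $\F_q$-point counts of these spaces. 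I would then apply the Grothendieck--Lefschetz trace formula to write $\#\op{Hur}_{G,n}(\F_q)$ as the alternating sum $\sum_i(-1)^i\,\op{tr}\bigl(\Frob_q\mid H^i_c(\op{Hur}_{G,n})\bigr)$, and use Deligne's weight bounds to estimate the $i$-th term by $O\bigl(q^{\dim-i/2}\bigr)$, so that the top-weight (connected-component) contribution dominates while every lower term is suppressed by a negative power of $q$ as $q\to\infty$.

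The crux, and the genuinely hard input, is the homological stability theorem for these Hurwitz spaces: as $n\to\infty$ the groups $H^i(\op{Hur}_{G,n})$ stabilize in a range growing linearly in $n$, with an explicitly identified stable value, and uniformly enough that the Betti numbers are bounded independently of $q$. This is what licenses taking the limit $q\to\infty$ term by term and identifies the dominant contribution with the number of stable components, from which the factors $\#\op{Aut}(A)$ and $\prod_{i\geq 1}(1-q^{-i})$ emerge from the group theory of $G$ and the combinatorics of the component set. I expect this stability statement, proved through a semisimplicial ``arc'' resolution together with a delicate analysis of the multiplication on $\bigoplus_n H_*(\op{Hur}_{G,n})$ for nonabelian $G$, to be the principal obstacle; the point counting and moment bookkeeping surrounding it are comparatively formal. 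Assembling the stabilized moments with the moment-uniqueness result then yields the convergence of both $\delta^+_A(q)$ and $\delta^-_A(q)$ to the stated value, completing the proof.
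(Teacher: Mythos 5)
The paper contains no proof of this statement: it is imported verbatim, with attribution, from \cite{ellenberg2016homological} and used downstream purely as a black box, so the only ``paper's proof'' is the citation itself. Your outline is a faithful summary of the argument in that reference --- moments via surjection counts, their reinterpretation as $\F_q$-point counts on Hurwitz spaces for the generalized dihedral group $A\rtimes \Z/2\Z$, the Grothendieck--Lefschetz trace formula with Deligne's weight bounds, and homological stability as the decisive input --- with the one slip that the needed uniformity from homological stability is a Betti-number bound uniform in the number of branch points $n$ (Betti numbers never depend on $q$; uniformity in $n$ is what makes the admissible threshold for $q$ independent of $n$, so the $n\to\infty$ and $q\to\infty$ limits can be combined). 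Note also that the limiting value should be the Cohen--Lenstra constant $\prod_{i\geq 1}\left(1-p^{-i}\right)/\#\op{Aut}(A)$: the $q^{-i}$ in the paper's display, which your sketch inherits, is a misquotation of \cite{ellenberg2016homological} (it would make the limit $1/\#\op{Aut}(A)$, inconsistent with the moments all being $1$), though this is harmless for the paper since only the positivity of the limit is used in Corollary \ref{cor 4.2}.
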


Given any finite abelian group $A$, the $p$-rank is the dimension of $A/pA$ over $\Z/p\Z$.
Let $\delta_r^{-}(q)$ be the lower density of imaginary quadratic extensions of $\F_q(T)$ for which the $p$-rank of the class group is $\geq r$. We have the following corollary.

\begin{corollary}\label{cor 4.2} Let $p$ be an odd prime number and $r>0$ be an integer. Then, there is a constant $c(p, r)>0$ depending only on $p$ and $r$ such that for all values of $q>c(p, r)$ satisfying $q\not\equiv 1\mod{p}$, we have that
$\delta_r^{-}(q)>0$.
\end{corollary}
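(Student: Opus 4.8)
The plan is to derive the corollary directly from Theorem \ref{EVW thm} by pinning down a single finite abelian $p$-group of $p$-rank at least $r$ and observing that its prescribed isomorphism class already occurs with positive lower density once $q$ is large. The point is that the event ``$p$-rank of $\op{Cl}_p(F)$ is $\geq r$'' contains the event ``$\op{Cl}_p(F)$ is isomorphic to a fixed group of $p$-rank $\geq r$'', so a positive lower density for the latter forces one for the former.

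Concretely, I would take $A := (\Z/p\Z)^r$. Then $A$ has $p$-rank exactly $r$, and $\op{Aut}(A) \cong \op{GL}_r(\F_p)$ has order $\prod_{i=0}^{r-1}(p^r - p^i)$, which is a finite positive integer. Applying Theorem \ref{EVW thm} to this choice of $A$, the lower density $\delta^-_A(q)$ converges, as $q \to \infty$ along $q \not\equiv 1 \bmod p$, to $\frac{\prod_{i \geq 1}(1 - q^{-i})}{\# \op{Aut}(A)}$. Since $\prod_{i \geq 1}(1 - q^{-i}) \to 1$ as $q \to \infty$, this limiting value is bounded below by a positive constant for all sufficiently large $q$; hence there is a constant $c(p,r) > 0$, depending only on $p$ and $r$ through the group $A$, such that $\delta^-_A(q) > 0$ whenever $q > c(p,r)$ and $q \not\equiv 1 \bmod p$.

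It then remains to pass from $A$ to the ``$p$-rank $\geq r$'' event. Every $F$ counted in $\cF_q(x; A)$ has $\op{Cl}_p(F) \cong A$, hence $p$-rank of the class group equal to $r$, and so is counted among the imaginary quadratic fields of discriminant $\leq x$ with $p$-rank $\geq r$. For every $x$ this gives a termwise inequality between the two counting functions; dividing by $\# \cF_q(x)$ and taking $\liminf$ (which respects termwise inequalities) yields $\delta_r^-(q) \geq \delta^-_A(q)$. Combined with the previous step, this gives $\delta_r^-(q) > 0$ for $q > c(p,r)$ with $q \not\equiv 1 \bmod p$, as desired.

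I do not anticipate a serious obstacle here: the corollary is essentially a repackaging of the unconditional Ellenberg--Venkatesh--Westerland theorem. The only points needing a moment's care are verifying that the $q$-dependent limiting value $\frac{\prod_{i\geq 1}(1-q^{-i})}{\#\op{Aut}(A)}$ stays uniformly bounded away from $0$ as $q \to \infty$ (so that positivity survives the limit), and getting the direction of monotonicity of lower densities right, namely that a subfamily has lower density no larger than the ambient family. Both are elementary, so the substance of the argument is entirely contained in the single application of Theorem \ref{EVW thm} to $A = (\Z/p\Z)^r$.
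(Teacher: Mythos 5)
Your proposal is correct and follows essentially the same route as the paper: fix a single abelian $p$-group $A$ of $p$-rank $\geq r$ (the paper takes an arbitrary such $A$, you take the concrete choice $(\Z/p\Z)^r$), apply Theorem \ref{EVW thm} to get $\delta_A^-(q)>0$ for all large $q\not\equiv 1 \bmod p$, and conclude via the monotonicity $\delta_r^-(q)\geq \delta_A^-(q)$. The extra details you supply (the order of $\op{GL}_r(\F_p)$ and the uniform lower bound on $\prod_{i\geq 1}(1-q^{-i})$) are harmless elaborations of the same argument.
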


\begin{proof}
Let $A$ be any choice of abelian $p$-group with $p$-rank $\geq r$. Then, by Theorem \ref{EVW thm}, we find that $\delta_A^{-}(q)>0$ for sufficiently large values of $q\not \equiv 1\mod{p}$. Clearly, $\delta_r^{-}(q)\geq \delta_A^{-}(q)$ since $A$ has $p$-rank $\geq r$, and the result follows.
\end{proof}
Let $r$ be a positive integer and $\delta^{-}(q, \lambda\geq r)$ be the lower density of imaginary quadratic fields with $\lambda_p(F)\geq r$. More precisely, let $\cF_q(x;\lambda\geq r)$ be the subset of $\cF_q(x)$ consisting of all $F$ such that $\lambda_p(F)\geq r$. Then, $\delta^{-}(q, \lambda\geq r)$ is given by the following lower limit
\[\delta^{-}(q, \lambda\geq r):=\liminf_{x\rightarrow \infty} \frac{\# \cF_q(x;\lambda\geq r)}{\# \cF_q(x)}.\]
Denote by $r_p(F)$ the $p$-rank of $\op{Cl}(F)$.
 Before proving the main Theorem, we establish a relationship between $\lambda_p(F)$ and $r_p(F)$.

\begin{lemma}\label{p rank and lambda} Let $p$ be an odd prime and $F$ be an imaginary quadratic field. Then, we have that
$\lambda_p(F)\geq r_p(F)$.
\end{lemma}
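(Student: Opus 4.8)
The plan is to translate both sides into statements about the $\Lambda$-module $\X$ and compare. First I would specialize Proposition \ref{control theorem} to $n=0$: since $(1+T)^{p^0}-1 = T$, this gives $X_0 = \op{Cl}_p(F)\simeq \X/T\X$. Because $\op{Cl}(F)$ splits as its $p$-part times a prime-to-$p$ part, reducing modulo $p$ kills the latter, so $r_p(F) = \dim_{\F_p}\op{Cl}_p(F)/p\op{Cl}_p(F)$. Combining these,
\[
r_p(F) = \dim_{\F_p}(\X/T\X)/p(\X/T\X) = \dim_{\F_p}\X/\mathfrak{M}\X,
\]
where $\mathfrak{M}=(p,T)$; by Nakayama's lemma this is exactly the minimal number of generators of $\X$ over $\Lambda$. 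So the task is to bound this quantity by $\lambda_p(F)$.

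Next I would invoke the Leitzel--Rosen theorem, which tells us that $\X$ is $\Lambda$-torsion with $\mu_p(F)=0$. Consequently the elementary module pseudo-isomorphic to $\X$ has no free part and no $\mu$-part, so it is of the form $\bigoplus_{i=1}^t \Lambda/(f_i(T)^{\lambda_i})$ with the $f_i$ distinguished. Each summand $\Lambda/(f_i(T)^{\lambda_i})$ is free over $\Z_p$ of rank $\lambda_i\op{deg}(f_i)$, so the elementary module is free over $\Z_p$ of rank $\lambda_p(F)$. A pseudo-isomorphism has finite kernel and cokernel, which are $\Z_p$-torsion, so $\X$ is itself a finitely generated $\Z_p$-module of $\Z_p$-rank $\lambda_p(F)$.

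The key step is to upgrade this to actual freeness over $\Z_p$. The $\Z_p$-torsion submodule of $\X$ is automatically a $\Lambda$-submodule, since the $\Lambda$-action is $\Z_p$-linear, and it is finite; by Theorem \ref{finite submodules} it must vanish. Hence $\X\simeq \Z_p^{\lambda_p(F)}$ as a $\Z_p$-module, and so $\X/p\X\simeq \F_p^{\lambda_p(F)}$. Since $\X/\mathfrak{M}\X$ is a quotient of $\X/p\X$, we conclude
\[
r_p(F) = \dim_{\F_p}\X/\mathfrak{M}\X \leq \dim_{\F_p}\X/p\X = \lambda_p(F),
\]
which is the desired inequality. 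The main obstacle is precisely this freeness step: absent the absence of nontrivial finite $\Lambda$-submodules (Theorem \ref{finite submodules}), one could only say $\X$ is finitely generated over $\Z_p$, and a finite torsion summand could contribute extra generators modulo $\mathfrak{M}$, inflating $r_p(F)$ past the $\Z_p$-rank $\lambda_p(F)$. The no-finite-submodule result is exactly what rules this out.
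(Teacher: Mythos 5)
Your proof is correct, and it follows the same overall skeleton as the paper's — both arguments rest on the two same pillars, namely the Leitzel--Rosen vanishing of $\mu_p(F)$ and Theorem \ref{finite submodules} on the absence of finite $\Lambda$-submodules — but the middle step is genuinely different. The paper identifies $r_p(F)$ with a lower bound for $g_p(F):=\dim_{\F_p}\X/\mathfrak{M}\X$ via the surjection $\X\twoheadrightarrow X_0$ (on which $T$ acts trivially), and then outsources the key inequality $\mu_p(F)+\lambda_p(F)\geq g_p(F)$ to an external citation, Lemma 2.2 of Matsuno's paper on constructing elliptic curves with large $\lambda$-invariants. You instead prove the needed special case of that lemma from scratch: using the structure theorem together with $\mu=0$ you observe that the elementary module is $\Z_p$-free of rank $\lambda_p(F)$, transfer finite generation and the $\Z_p$-rank back to $\X$ across the pseudo-isomorphism, and then use Theorem \ref{finite submodules} to kill the $\Z_p$-torsion submodule (correctly noting it is a finite $\Lambda$-submodule), concluding $\X\simeq\Z_p^{\lambda_p(F)}$ and hence $\dim_{\F_p}\X/\mathfrak{M}\X\leq\dim_{\F_p}\X/p\X=\lambda_p(F)$. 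You also sharpen the other half: by invoking Proposition \ref{control theorem} at $n=0$ you get the equality $r_p(F)=\dim_{\F_p}\X/\mathfrak{M}\X$ rather than just the inequality the paper needs. What your route buys is self-containedness (no appeal to Matsuno) plus the stronger structural conclusion that $\X$ is $\Z_p$-free of rank exactly $\lambda_p(F)$; what the paper's route buys is brevity, and the cited lemma also covers the case $\mu>0$, which is irrelevant here but makes the argument more modular.
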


\begin{proof}
Let $\mathfrak{M}=(p, T)$ be the maximal ideal of $\Lambda$ and set $g_p(F)$ to denote the dimension of $\X/\mathfrak{M}$ over $\Lambda/\mathfrak{M}=\Z/p\Z$. Since by Theorem \ref{finite submodules}, $\X$ does not contain any non-trivial finite $\Lambda$-submodules, we have therefore by \cite[Lemma 2.2]{matsuno2007construction} that $\mu_p(F)+\lambda_p(F)\geq g_p(F)$. Since the $\mu$-invariant vanishes, we find that $\lambda_p(F)\geq g_p(F)$. Since $\X$ surjects onto $X_0$, it follows that $\X/\mathfrak{M}$ surjects onto $X_0/p$; the action of $T$ on $X_0$ being trivial. Therefore, $g_p(F)\geq r_p(F)$, and thus the result follows.
\end{proof}

\begin{theorem}\label{main theorem}
Let $p$ be an odd prime number and $r>0$ be an integer. Then, there is a constant $c(p, r)>0$ depending only on $p$ and $r$ such that for all values of $q>c(p, r)$ satisfying $q\not\equiv 1\mod{p}$, we have that $\delta^{-}(q, \lambda\geq r)>0$.
\end{theorem}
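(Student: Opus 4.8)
The plan is to deduce the theorem directly from the two preparatory results already established: the $p$-rank lower bound for the $\lambda$-invariant in Lemma~\ref{p rank and lambda}, and the positive-density statement for large $p$-rank in Corollary~\ref{cor 4.2}. The guiding observation is that a large $\lambda$-invariant is \emph{forced} by a large $p$-rank of the class group, so it suffices to exhibit a positive proportion of imaginary quadratic fields $F$ whose class group $\op{Cl}(F)$ has $p$-rank at least $r$; this is precisely what the Ellenberg--Venkatesh--Westerland theorem supplies in the large-$q$ regime.

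First I would record the set-theoretic containment underlying the argument. By Lemma~\ref{p rank and lambda}, every imaginary quadratic field $F$ with $r_p(F) \geq r$ automatically satisfies $\lambda_p(F) \geq r$. Intersecting with the truncated family $\cF_q(x)$, this yields for every $x>0$ the inclusion
\[
\{F \in \cF_q(x) : r_p(F) \geq r\} \subseteq \cF_q(x; \lambda \geq r),
\]
and hence the uniform counting inequality $\#\cF_q(x;\lambda\geq r) \geq \#\{F \in \cF_q(x) : r_p(F) \geq r\}$.

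Next I would pass to lower densities. Dividing both sides by $\#\cF_q(x)$ and taking $\liminf_{x\to\infty}$ preserves the inequality, so that $\delta^{-}(q,\lambda\geq r) \geq \delta_r^{-}(q)$. Finally, Corollary~\ref{cor 4.2} furnishes a constant $c(p,r)>0$ such that $\delta_r^{-}(q) > 0$ whenever $q > c(p,r)$ and $q \not\equiv 1 \mod{p}$; taking this same $c(p,r)$ then gives $\delta^{-}(q,\lambda\geq r)>0$ and completes the proof.

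The conceptual weight of the theorem lies not in this final deduction but in the preceding steps. The genuine obstacle is Lemma~\ref{p rank and lambda}, which converts the combinatorial input on $p$-ranks into information about the Iwasawa module $\X$; this rests on the vanishing of the $\mu$-invariant together with the absence of nontrivial finite $\Lambda$-submodules (Theorem~\ref{finite submodules}), through the chain $\mu_p(F) + \lambda_p(F) \geq g_p(F) \geq r_p(F)$. Once that bridge is in place, the only point in the present argument requiring any care is the density bookkeeping, namely checking that the inclusion of families at each finite level $x$ survives the passage to $\liminf$; this is routine precisely because the counting inequality holds uniformly in $x$.
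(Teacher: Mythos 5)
Your proposal is correct and follows exactly the paper's own argument: the paper likewise deduces $\delta^{-}(q,\lambda\geq r)\geq \delta_r^{-}(q)$ from Lemma~\ref{p rank and lambda} and then concludes by invoking Corollary~\ref{cor 4.2}. Your version merely spells out the set-theoretic inclusion and the passage to $\liminf$ that the paper leaves implicit.
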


\begin{proof}
It follows from Lemma \ref{p rank and lambda} that $\delta^{-}(q, \lambda\geq r)\geq \delta_r^-(q)$, and the result thus follows from Corollary \ref{cor 4.2}.
\end{proof}

\bibliographystyle{alpha}
\bibliography{references}
\end{document}